\DeclareMathOperator{\Pic}{\mathrm{Pic}}
\newtheorem{conjecture}[equation]{Conjecture} 
\newtheorem{theorem}[equation]{Theorem}
\newtheorem{corollary}[equation]{Corollary}
\theoremstyle{definition}
\newtheorem{definition}{Definition}[section]
\theoremstyle{remark}
\newtheorem{remark}[equation]{Remark}
\newtheorem{notation}[equation]{Notation}
\begin{document}
\begin{abstract} We use the canonical bundle formula for parabolic fibrations to give an inductive approach to the generalized abundance conjecture using nef reduction. In particular, we observe that generalized abundance holds for a klt pair $(X,B)$ if the nef dimension $n(K_X+B+L)=2$ and $K_X+B \geq 0$ or $n(K_X+B+L)=3$ and $\kappa(K_X+B )>0$.

\end{abstract}

\title[Nef reduction]{An inductive approach to generalized abundance using nef reduction}
\author{Priyankur Chaudhuri}
\address{Department of Mathematics, University of Maryland, College Park, MD 20742}
\email{pchaudhu@umd.edu}

\maketitle
\section{Introduction} In \cite{LP}, Lazić and Peternell propose the following

\begin{conjecture}(Generalized Abundance) Let $(X,B)$ be a klt pair with $K_X+B$ pseudoeffective. If $L$ is a nef divisor on $X$ such that $K_X+B+L $ is also nef, then $K_X+B+L \equiv M$ for some semiample $\mathbb{Q}$-divisor $M$.\end{conjecture}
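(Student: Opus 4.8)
The plan is to proceed by induction on $n := \dim X$, reducing generalized abundance to its low-dimensional cases by means of the nef reduction of $D := K_X + B + L$ together with the canonical bundle formula. Write $k := n(D)$. If $k = 0$ then $D \equiv 0$ and $M = 0$ works, and if $k = n$ there is nothing to reduce: for $n \le 3$ this is exactly the content of the low-dimensional statements --- the surface case, known unconditionally, and the threefold case, known under the hypothesis $\kappa(K_X+B) > 0$. So assume $0 < k < n$. Let $f \colon X \dashrightarrow Y$ be the nef reduction of $D$, so that $f$ is almost holomorphic and dominant with connected fibres, $\dim Y = k$, and $D$ is numerically trivial on a very general fibre of $f$. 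Choose a common resolution $p \colon W \to X$ and a fibration $g \colon W \to Z$ onto a smooth birational model $Z$ of $Y$ resolving $f$, and set $B_W := p^{*}(K_X+B) - K_W$ and $L_W := p^{*}L$, so that $(W, B_W)$ is sub-klt and $p^{*}D = K_W + B_W + L_W$ is nef.

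The crucial point concerns the general fibre $F$ of $g$. There $(p^{*}D)|_F \equiv 0$, so $(K_W + B_W)|_F + L_W|_F \equiv 0$. A curve sweeping out $F$ also sweeps out $W$, hence has movable class, so by the Boucksom--Demailly--Paun--Peternell characterisation of pseudoeffectivity $(K_W + B_W)|_F = (p^{*}(K_X+B))|_F$ is pseudoeffective (using that $K_X+B$, hence its pullback $K_W+B_W$, is pseudoeffective); and $L_W|_F$ is nef, hence pseudoeffective as well. Since the pseudoeffective cone is salient, both $(K_W+B_W)|_F$ and $L_W|_F$ are numerically trivial. It follows that the general fibre of $f$, with the restricted boundary, is a klt pair whose log canonical class is numerically trivial, hence $\mathbb Q$-trivial by log abundance for numerically trivial pairs; thus $g$ is an lc-trivial fibration with respect to $K_W + B_W$.

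Apply the canonical bundle formula to $g$: there is a generalized klt pair $(Z, B_Z, M_Z)$, with $M_Z$ the nef moduli part, such that $K_W + B_W = p^{*}(K_X+B) \sim_{\mathbb Q} g^{*}(K_Z + B_Z + M_Z)$; in particular $\kappa(K_Z + B_Z + M_Z) = \kappa(K_X+B)$. Since $L_W = p^{*}L$ is numerically trivial on the fibres of $g$, after a further birational modification it descends to a nef $\mathbb Q$-divisor $L_Z$ on $Z$ with $p^{*}D \equiv g^{*}(K_Z + B_Z + M_Z + L_Z)$; as $p^{*}D$ is nef and $g$ is surjective, $K_Z + B_Z + M_Z + L_Z$ is nef. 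We are thus precisely in the setting of generalized abundance for the generalized klt pair $(Z, B_Z, M_Z)$ together with the nef divisor $L_Z$, in dimension $k < n$. If $k = 2$, then $Z$ is a surface and the statement holds there unconditionally; if $k = 3$, then $\kappa(K_Z + B_Z + M_Z) = \kappa(K_X+B) > 0$ and the threefold case applies. Either way $K_Z + B_Z + M_Z + L_Z \equiv M'$ for some semiample $\mathbb Q$-divisor $M'$, whence $p^{*}D \equiv g^{*}M'$ is semiample on $W$; since $p$ is birational with $p_{*}\mathcal O_W = \mathcal O_X$, this descends to show that $D$ is numerically equivalent to a semiample $\mathbb Q$-divisor on $X$, as desired.

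The method thus reduces the cases $n(K_X+B+L) = 2$ and $n(K_X+B+L) = 3$ of the conjecture to the surface and threefold cases; the main obstacle it does not overcome is the case $n(D) = \dim X$, where the nef reduction is generically finite and carries no information --- that is the genuine heart of the conjecture, and it is what blocks pushing the induction further. Within the argument above, the steps demanding care are: the descent of the nef part $L_W$ to a nef divisor $L_Z$ on $Z$, which obliges one to pass to a birational model over which the nef reduction is well-behaved along all fibres and not merely the very general one; the bookkeeping in the (generalized) canonical bundle formula, in particular the effectivity and klt-ness of the discriminant $B_Z$ when $B_W$ is only a sub-boundary; and the input, used on the fibres, that a numerically trivial klt log canonical divisor is $\mathbb Q$-linearly trivial.
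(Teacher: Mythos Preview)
The statement you were asked to prove is the full Generalized Abundance \emph{conjecture}; the paper does not prove it, and neither do you --- as you yourself note, the case $n(D)=\dim X$ is untouched. What your write-up actually establishes is the content of the paper's Theorem~\ref{1} and Corollary~\ref{10} (the cases $n(K_X+B+L)\le 2$ and $n(K_X+B+L)=3$ with $\kappa(K_X+B)>0$), and for those your strategy --- nef reduction, numerical triviality of $K+B$ and $L$ separately on the general fibre, canonical bundle formula, then induction on the base --- is the same as the paper's.

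The one genuine technical ingredient you are missing, and which is not merely bookkeeping, is the use of the Nakayama--Zariski decomposition $P_\sigma$. After the birational base changes and the $A$-trick that makes the discriminant effective, the paper does \emph{not} obtain a clean identity $\mu^*(K_X+B+L)\equiv \overline f^*(K_{\overline Z}+\Delta'_{\overline Z}+L_{\overline Z})$; rather there are residual effective, $\mu$-exceptional terms on the left and a non-fibre-supporting effective term $A^+$ on the right. These are removed by passing to $P_\sigma$ on both sides (via \cite[Lemma~2.16]{GL} and \cite[Lemma~2.5]{LP}) before invoking the low-dimensional input on $\overline Z$. Your sentence ``$K_W+B_W\sim_{\mathbb Q} g^*(K_Z+B_Z+M_Z)$ with $(Z,B_Z,M_Z)$ generalized klt'' hides exactly this step: with $B_W$ only a sub-boundary, the discriminant need not be effective without the correction, and the resulting discrepancy is what forces the $P_\sigma$ argument. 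You flag this at the end as a point ``demanding care'', but it is the substantive gap between your sketch and a complete proof.
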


They show that this conjecture holds in case $\dim X =2$ and for $\dim X=3$ if $\kappa(K_X+B) >0$ (see \cite[Corollary C,D]{LP}). The main purpose of this article is to prove the following result.
\begin{theorem} \label{1}
Let $(X,B)$ be an $n$-dimensional klt pair with $K_X+B \geq 0$ and let $L \in \Pic X$ be nef such that $K_X+B+L$ is nef with nef dimension $n(K_X+B+L) = d$. 
\newline
Assume termination of klt flips \cite[Conjecture 5-1-13]{KMM} in dimension $d$, abundance conjecture \cite[Conjecture 3.12]{KM} in dimension $\leq d$, semiampleness conjecture \cite[page 2]{LP} in dimension $\leq d$ and generalized non-vanishing conjecture \cite[page 2]{LP} in dimension $ \leq d-1$. 
\newline
Then $K_X+B+L \equiv M$ for some semiample divisor $M$.
\end{theorem}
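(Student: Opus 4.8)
Write $D:=K_{X}+B+L$. The plan is to fibre $X$ by the nef reduction of $D$ over a base $Y$ with $\dim Y=d$, to transfer $D$ to $Y$ via the canonical bundle formula, and then to invoke generalized abundance on $Y$, which is $d$-dimensional and hence governed by the assumed conjectures. The case $d=0$ is trivial since then $D\equiv 0$; the boundary case $d=\dim X$ follows from the assumed conjectures by the arguments of \cite{LP}; so we assume $1\le d<\dim X$. Let $f\colon X\dashrightarrow Y$ be the nef reduction of $D$: an almost holomorphic dominant map with connected fibres, $\dim Y=d$, with $D|_{F}\equiv 0$ for a general fibre $F$ and $D\cdot C>0$ for every curve $C$ through a very general point of $X$ with $\dim f(C)>0$. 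Fix a common log resolution, giving a birational morphism $\mu\colon\widetilde X\to X$ and a morphism $g:=f\circ\mu\colon\widetilde X\to Y$ with $Y$ smooth, and set $K_{\widetilde X}+\widetilde B:=\mu^{*}(K_{X}+B)$ with $\widetilde B$ the resulting sub-boundary, so that $(\widetilde X,\widetilde B)$ is sub-klt. Since $K_{X}+B\ge 0$ and $L$ is nef, both $(K_{X}+B)|_{F}$ and $L|_{F}$ are pseudoeffective with sum $D|_{F}\equiv 0$; as the pseudoeffective cone contains no line, each is numerically trivial, and in particular $(F,B_{F})$ is klt with $K_{F}+B_{F}\equiv 0$. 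By the theorem of Nakayama and Gongyo on numerically trivial klt pairs we get $K_{F}+B_{F}\sim_{\mathbb{Q}}0$; this input is \emph{unconditional}, which is essential because $\dim F=\dim X-d$ need not be bounded by $d$, so the assumed conjectures may fail on $F$.

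Hence $K_{X}+B$ is $\mathbb{Q}$-linearly trivial over the generic point of $Y$. After the usual normalisations (treating the vertical part, passing to a sufficiently high model of $Y$), the canonical bundle formula of Ambro and Fujino--Mori produces a sub-klt pair $(Y,B_{Y})$ and the moduli part $M_{Y}$, a nef $\mathbb{Q}$-divisor on $Y$, with
\[
\mu^{*}(K_{X}+B)=K_{\widetilde X}+\widetilde B\;\sim_{\mathbb{Q}}\;g^{*}\!\left(K_{Y}+B_{Y}+M_{Y}\right).
\]
By the semiampleness conjecture in dimension $\le d$, $M_{Y}$ is semiample; and since $g^{*}(K_{Y}+B_{Y}+M_{Y})\sim_{\mathbb{Q}}\mu^{*}(K_{X}+B)$ is $\mathbb{Q}$-effective and $g_{*}\mathcal{O}_{\widetilde X}=\mathcal{O}_{Y}$, the class $K_{Y}+B_{Y}+M_{Y}$ is $\mathbb{Q}$-effective. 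The crux is now to transfer the remaining summand $L$ — equivalently, the whole of $\mu^{*}D$ — to the base. I plan to show, combining the discriminant construction with the maximality $\dim Y=n(D)$, that $\mu^{*}D$ is numerically trivial on \emph{every} $g$-contracted curve, so that $\mu^{*}D\equiv g^{*}N_{Y}$ for a nef $\mathbb{Q}$-divisor $N_{Y}$ on $Y$; then $N_{Y}$ is pseudoeffective, and $N_{Y}$ is numerically the generalized log canonical class of a generalized klt pair on $Y$ with boundary $B_{Y}$ whose nef part $N_{Y}-K_{Y}-B_{Y}$ is nef (its $g$-pullback is $\equiv\mu^{*}L+g^{*}M_{Y}$, a sum of nef classes). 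The subtlety is that, unlike $K_{X}+B$, the nef divisor $L$ need only be numerically — not $\mathbb{Q}$-linearly — trivial on general fibres, indeed it may be a non-torsion numerically trivial class on the Calabi--Yau fibre $F$, so the argument must stay numerical throughout and must in addition rule out $D$-positive rigid curves in special fibres of $g$.

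Finally $\dim Y=d$. Using termination of klt flips in dimension $d$ one runs the relevant MMP on $Y$, and then, as in the low-dimensional cases of \cite{LP}, abundance and semiampleness in dimension $\le d$ together with generalized non-vanishing in dimension $\le d-1$ — the latter applied to the fibres, of dimension $<d$, of the fibration that appears — show that $N_{Y}\equiv M'_{Y}$ for a semiample $\mathbb{Q}$-divisor $M'_{Y}$ on $Y$. Pick $m$ with $mM'_{Y}$ base-point free and let $\psi\colon Y\to V$ be the associated morphism, $mM'_{Y}=\psi^{*}A$ with $A$ very ample. Then $(\psi\circ g)^{*}A=mg^{*}M'_{Y}\equiv m\mu^{*}D$ has intersection number $0$ with every curve contracted by $\mu$, so $\psi\circ g$ factors through $\mu$ by the rigidity lemma, say $\psi\circ g=\phi\circ\mu$ with $\phi\colon X\to V$ a morphism. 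Now $\phi^{*}A$ is semiample on $X$ and $\mu^{*}(\phi^{*}A)=mg^{*}M'_{Y}\equiv\mu^{*}(mD)$; as $\mu^{*}$ is injective on numerical classes, $\phi^{*}A\equiv mD$, and therefore $K_{X}+B+L=D\equiv\tfrac{1}{m}\phi^{*}A=:M$ with $M$ semiample. The main obstacle I anticipate is the transfer of $L$ to the base carried out in the second paragraph.
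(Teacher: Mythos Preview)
Your strategy matches the paper's in broad outline---nef reduction, canonical bundle formula, reduction to the $d$-dimensional base---but the step you yourself flag as ``the main obstacle'' is a genuine gap, and the paper resolves it by a different mechanism than the one you propose. You plan to show that $\mu^{*}D$ is numerically trivial on \emph{every} $g$-contracted curve, so that $\mu^{*}D\equiv g^{*}N_{Y}$. Nothing in the nef-reduction construction forces this: over special points of $Y$ the fibres of $g$ acquire $\mu$-exceptional components, and the defining property of nef reduction is only a statement about curves through very general points, so $\mu^{*}D$ may well be positive on curves lying in those exceptional loci. The paper does not attempt this descent. Instead, after base changes arranging that $L$ descends numerically (via \cite[Lemma~3.1]{LP}) and that $K+B$ descends $\mathbb{Q}$-linearly (via flattening, semicontinuity, and a finite cover), it passes to the Nakayama--Zariski positive part $P_{\sigma}$. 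The three facts used are: $P_{\sigma}(\mu^{*}D)=\mu^{*}D$ since $D$ is nef; adding an effective $\mu$-exceptional divisor does not change $P_{\sigma}$ \cite[Lemma~2.16]{GL}; and $P_{\sigma}$ commutes with pullback by a surjective morphism once the positive part on the base is nef \cite[Lemma~2.5]{LP}. Together these give $\mu^{*}D\equiv \overline{f}^{*}P_{\sigma}(K_{\overline Z}+\Delta'_{\overline Z}+L_{\overline Z})$ without ever requiring $\mu^{*}D$ to vanish on special fibres. This is the missing idea in your outline.

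Two smaller points. First, the canonical bundle formula does not literally give $\mu^{*}(K_{X}+B)\sim_{\mathbb{Q}}g^{*}(K_{Y}+B_{Y}+M_{Y})$: there are effective $\mu$-exceptional and vertical correction terms (the paper's $B_{\overline X}^{-}$ and $A^{\pm}$) that must be carried along, and it is exactly these that $P_{\sigma}$ absorbs. Second, the moduli part $M_{Y}$ is not handled via the assumed ``semiampleness conjecture'' (which in \cite{LP} concerns divisors of the form $K+B+L$ on klt pairs, not moduli parts of fibrations); the paper instead invokes Ambro's unconditional theorem \cite[Theorem~3.3]{Am2} that the moduli b-divisor is b-nef and b-abundant, which is enough to replace $M$ by an effective boundary contribution and obtain a genuine klt pair $(\overline Z,\Delta'_{\overline Z})$ on the base. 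The assumed conjectures in dimension $\le d$ then enter only through \cite[Theorem~5.3]{LP} applied on $\overline Z$.
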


Note that the phrase ``Conjecture ($*$) holds in dimension $d$ (resp. $\leq d$)" above means that ($*$) holds for all klt pairs $(X,B)$ such that $\dim X =d$ (resp. $\leq d$).\\

In case $L=0$, such a theorem has been proved by \cite{Am}. The main ingredients of our proof are the canonical bundle formula for parabolic fibrations \cite{FM} and Nakayama-Zariski decomposition \cite{Nak}. We have the following corollary

\begin{corollary} \label{10}
Generalized abundance holds (in any dimension) in the following two cases:

\begin{enumerate}
\item $n(K_X+B+L)=2$ and $K_X+B \geq 0$, or 
\item $n(K_X+B+L)=3$ and $\kappa(K_X+B) >0$.
\end{enumerate}
\end{corollary}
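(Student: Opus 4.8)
Both parts will be obtained by a direct application of Theorem~\ref{1}, with $d=n(K_X+B+L)=2$ in case~(1) and with $d=3$ in case~(2); the whole point is that in these two narrow ranges every conjectural input to Theorem~\ref{1} is already an unconditional theorem, so nothing is left to assume. The running hypothesis ``$K_X+B\ge 0$'' of Theorem~\ref{1} is available in both cases: it is postulated in~(1), and in~(2) it follows from $\kappa(K_X+B)>0$, which in particular forces $K_X+B$ to be $\mathbb{Q}$-linearly equivalent to an effective divisor. The plan is therefore simply to record, for $d=2$ and $d=3$, the status of the four ingredients: termination of klt flips in dimension $d$, abundance in dimension $\le d$, the semiampleness conjecture in dimension $\le d$, and the generalized non-vanishing conjecture in dimension $\le d-1$.

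For case~(1) we take $d=2$ (the cases $d\le 1$ being easier, essentially trivial, instances of the same input). Termination of klt flips in dimension $2$ holds vacuously, since the surface MMP proceeds entirely by divisorial contractions and no flips occur. Abundance for klt pairs in dimension $\le 2$ is classical, a consequence of the logarithmic Enriques--Kodaira classification. The semiampleness conjecture in dimension $\le 2$ and the generalized non-vanishing conjecture in dimension $\le 1$ are likewise known; the latter is immediate, since a nef $\mathbb{Q}$-divisor of non-negative degree on a smooth projective curve is semiample and pseudo-effectivity of $K_X+B$ forces the degree to be non-negative. Feeding these into Theorem~\ref{1} yields $K_X+B+L\equiv M$ with $M$ semiample.

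For case~(2) we take $d=3$. Termination of klt flips in dimension $3$ is known (Kawamata, Shokurov), abundance for klt pairs in dimension $\le 3$ is known (Miyaoka and Kawamata for terminal threefolds, and the log version thereafter), and the semiampleness conjecture in dimension $\le 3$ together with the generalized non-vanishing conjecture in dimension $\le 2$ are available in the literature. With these inputs Theorem~\ref{1} applies to $(X,B)$ and $L$ and produces a semiample $M$ with $K_X+B+L\equiv M$.

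The step I expect to demand genuine care -- rather than ingenuity -- is checking that the \emph{exact} forms of the semiampleness and generalized non-vanishing conjectures entering the proof of Theorem~\ref{1} coincide with the statements actually established in dimension $\le 3$, respectively $\le 2$, so that no implicitly stronger low-dimensional assertion has been used. The appearance of the strict inequality $\kappa(K_X+B)>0$ in~(2), rather than $\kappa(K_X+B)\ge 0$, should be read as marking the limit of the method: it is precisely the hypothesis under which the nef-reduction and canonical bundle formula induction of Theorem~\ref{1} stays within the range of dimensions where these conjectures are theorems, the residual case $\kappa(K_X+B)=0$ falling outside its reach.
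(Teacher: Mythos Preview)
Your treatment of case~(1) is fine and matches the paper: with $d=2$ all four inputs to Theorem~\ref{1} are indeed theorems, since generalized abundance (hence both the semiampleness and generalized non-vanishing conjectures) is known for klt surfaces by \cite[Corollary~C]{LP}. The paper does not spell this out separately, so your direct invocation of Theorem~\ref{1} is exactly what is intended.

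Case~(2), however, has a genuine gap. You assert that ``the semiampleness conjecture in dimension $\le 3$ \dots\ [is] available in the literature,'' but this is not so: the semiampleness conjecture is one half of generalized abundance, and as the introduction already notes, generalized abundance for threefolds is only known when $\kappa(K_X+B)>0$ (\cite[Corollary~D]{LP}). So Theorem~\ref{1} cannot be applied as a black box with $d=3$. The hypothesis $\kappa(K_X+B)>0$ is doing much more than guaranteeing $K_X+B\ge 0$; it is precisely what lets one stay inside the \emph{known} $\kappa>0$ threefold case on the base.

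The paper's argument therefore does not re-invoke Theorem~\ref{1}; it reopens its proof. Using equation~(\ref{2}) and the Nakayama--Zariski comparison \cite[Lemma~2.16, Lemma~2.9]{GL}, one shows
\[
\kappa(K_X+B)=\kappa\bigl(P_\sigma(K_{\overline{Z}}+\Delta_{\overline{Z}}')\bigr)=\kappa(K_{\overline{Z}}+\Delta_{\overline{Z}}')>0,
\]
so the three-dimensional klt pair $(\overline{Z},\Delta_{\overline{Z}}')$ itself has positive Kodaira dimension. One then applies \cite[Remark~5.4]{LP} (the unconditional $\kappa>0$ threefold statement) in place of the conditional \cite[Theorem~5.3]{LP} to conclude that $P_\sigma(K_{\overline{Z}}+\Delta_{\overline{Z}}'+L_{\overline{Z}})$ is numerically semiample, and finishes via~(\ref{6}). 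The missing idea in your proposal is this transfer of the inequality $\kappa>0$ from $X$ down to $\overline{Z}$.
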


Note that any divisor of nef dimension $1$ is already numerically equivalent to a semiample divisor by \cite[2.4.4]{8 authors}. Thus generalized abundance also holds in this case.

\section{Preliminaries}
\begin{definition}
[Singularities of pairs]  Let $(X,B)$ be a sub-pair consisting of a normal variety $X$ and a $\mathbb{Q}$-divisor $B$ on $X$ such that $K_X+B$ is $\mathbb{Q}$-Cartier. It is called \emph{sub-klt} if there exists a log resolution $Y \xrightarrow {\mu} X$ of $(X,B)$ such that letting $B_Y$ be defined by $K_Y+B_Y= \mu^*(K_X+B)$, all coefficients of $B_Y$ are $<1$ and \emph{sub-lc} if all coefficients of $B_Y$ are $\leq 1$. If $B \geq 0$, we drop the prefix sub.
\end{definition}

\begin{definition}
[Nef reduction and nef dimension \cite{8 authors}] Let $X$ be a normal projective variety and let $L \in \Pic X$ be nef. Then there exists a dominant rational map $ \phi: X \dashrightarrow  Y$ with connected fibers which is proper and regular over an open subset of $Y$ (i.e. there exists $V \subset Y$ nonempty open such $\phi$ restricts to a proper morphism $ \phi|_{\phi^{-1}(V)}:\phi^{-1}(V) \rightarrow V$) where $Y$ is also normal projective such that

\begin{enumerate}
\item If $F \subset X$ is a general compact fiber of $\phi $ with $\dim F= \dim X - \dim Y$, then $L|_{F} \equiv 0 $.

\item If $x \in X$ is a very general point and $C \subset X$ a curve passing $x$ such that $\dim (\phi(C)) >0 $, then $(L \cdot C) > 0$.

\end{enumerate}

$\phi$ is called the \textbf{nef reduction map} of $L$ and $\dim Y$ the \textbf{nef dimension} $n(L)$ of $L$. 

\end{definition}

\begin{remark} Note that if $\phi: X \dashrightarrow Y$ is regular over an open subset of $Y$, then there exists a resolution $\hat{\phi}:\hat{X} \rightarrow Y$ of $\phi$ such that the exceptional divisor $Ex (\hat{X}/X)$ is $\hat{\phi}$-vertical. For example, $\hat{X}$ can be chosen to be the closure of the graph of $\phi$.
\end{remark}

\begin{notation}
For a pseudoeffective divisor $D$ on a smooth projective variety $X$, $P_{\sigma}(D)$ and $N_{\sigma}(D)$ will denote the positive and negative parts of the Nakayama-Zariski decomposition of $D$ (see \cite[Chapter 3]{Nak} for details). $\kappa(D)$ will denote the Iitaka dimension and $\nu(D)$ the numerical dimension (see \cite[Definition 2.10]{GL}) of $D$.

\end{notation}

\section{Inductive approach to generalized abundance}

\begin{proof}[Proof of Theorem \ref{1}]  We will follow some ideas of \cite[Theorem 5.1]{Am} and \cite[Lemma 4.4]{GL}. Let $\Phi$ be the nef reduction map of $K_X+B+L$. Let $\hat{X}$ be the normalization of the closure of the graph of $\Phi $. We have an induced commutative diagram:

\begin{center}
\begin{tikzcd} 
\hat{X} \arrow[d,"\beta"] \arrow[dr, "\phi"] \\
X \arrow[r, dotted, "\Phi"] & Y

\end{tikzcd}
\end{center}

Note that since $\Phi$ is a regular over an open subset of $Y$, $Ex(\beta)$ is $\phi$-vertical. We will make base changes that preserve this property. Let $F \subset \hat{X}$ be a general fiber of $\phi $. Then 
\begin{equation}
(K_{\hat{X}}+B_{\hat{X}}+L_{\hat{X}})|_{F} \equiv 0 \label{triv}.
\end{equation}
Here $L_{\hat{X}}:= \beta^*L$ and $B_{\hat{X}}$ is defined by $K_{\hat{X}}+B_{\hat{X}} = \beta^*(K_X+B)$. Now since $K_{\hat{X}}+B_{\hat{X}} \geq 0$, we have $K_F+B_F \geq 0$ which implies $K_F+B_F \sim_{\mathbb{Q}} 0$. Indeed, suppose that $K_F+B_F >0$. Then there exists a curve $C \subset F$ such that $((K_F+B_F)\cdot C) >0$. Then $((K_F+B_F+L_F) \cdot C) >0$. This contradicts (\ref{triv}). We also conclude that $L_{\hat{X}}|_F \equiv 0$.\\

By \cite[Lemma 3.1]{LP}, there exists $\sigma: Y^{'} \rightarrow Y$ birational from a smooth projective variety $Y^{'}$ such that letting $X^{'}$ denote the normalization of the main component of $\hat {X} \times _Y Y^{'} \rightarrow Y^{'}$ and $\phi^{'}: X^{'} \rightarrow Y^{'}$ the induced morphism, there exists a nef $\mathbb{Q}$-Cartier divisor $L_{Y^{'}}$ on $Y^{'}$ such that $L_{X^{'}} \equiv \phi^{'*}(L_{Y^{'}})$. Letting $F^{'}$ denote a general fiber of $\phi^{'}$, note that $(B_{X^{'}}^{-})|_{F^{'}} \sim 0$. Thus 

\begin{center}
$(K_{X^{'}}+B_{X^{'}})|_{F^{'}} \sim (K_{X^{'}}+B_{X^{'}}^{+})|_{F^{'}} \sim 0$.
\end{center}

Then by \cite[Section 4]{FM}, there exists a birational morphism $\widetilde{Y} \rightarrow Y^{'}$, $\widetilde{X}$ birational to the main component of $X^{'}\times _{Y^{'}} \widetilde{Y}$ where $\widetilde{X}$ and $\widetilde{Y}$ are both smooth projective such that letting $\widetilde{\phi}: \widetilde{X} \rightarrow \widetilde{Y}$ denote the induced morphism, there exists a $\mathbb{Q}$-divisor $\Delta$ on $\widetilde{X}$ such that $\widetilde{\phi}: (\widetilde{X}, B_{\widetilde{X}}^{+}+ \Delta) \rightarrow \widetilde{Y}$ is a klt trivial fibration (\cite[Definition 2.1]{LF}) where $\Delta^{+}$ is exceptional over $\widetilde{Y}$ and $X^{'}$ and $\widetilde{\phi}_{*} \mathcal{O}_{\widetilde{X}}(\left\lfloor{l \Delta^{-}}\rfloor\right) \cong \mathcal{O}_{\widetilde{Y}}$ for all $l \in \mathbb{N}$.

\begin{center}
\begin{tikzcd}
\widetilde{X} \arrow[r,"\pi"] \arrow[d, "\widetilde{\phi}"] & X^{'} \arrow[r] \arrow[d, "\phi ^{'}"] &\hat{X} \arrow[r, "\beta"] \arrow[d,"\phi"] & X \arrow[dl, dotted]\\
\widetilde{Y} \arrow[r] &Y^{'} \arrow[r] &Y
\end{tikzcd}
\end{center}
\vspace{0.3 cm}

 By \cite[Lemma 2.15]{GL}, the last condition implies that the vertical part $(\Delta^{-})^{v}$ is $\widetilde{\phi}$-degenerate (see \cite[Definition 2.14]{GL}). Letting $B_{\widetilde{Y}}$ and $M_{\widetilde{Y}}$ denote the induced discriminant and moduli divisors on $\widetilde{Y}$, we have

\begin{center}
$K_{\widetilde{X}}+B_{\widetilde{X}}^{+}+\Delta^{+}-\Delta^{-}\sim_{\mathbb{Q}} \widetilde{\phi}^{*}(K_{\widetilde{Y}}+B_{\widetilde{Y}}+M_{\widetilde{Y}})$.
\end{center}

Since $(\Delta^{-})^{v}$ is $ \widetilde{\phi}$-degenerate, it follows from the definition of $B_{\widetilde{Y}}$ that $B_{\widetilde{Y}} \geq 0$. Since $K_{\widetilde{X}}+B_{\widetilde{X}}^{+}=\pi^*(K_{X^{'}}+B_{X^{'}}^+)+E$, where $E \geq 0$ is $\pi$-exceptional, if $\widetilde{F} \subset \widetilde{X}$ is a general fiber of $\widetilde{\phi}$, $(K_{\widetilde{X}}+B_{\widetilde{X}}^+)|_{\widetilde{F}} =E|_{\widetilde{F}}$. Note that $E|_{\widetilde{F}}$ is an exceptional divisor for the induced birational morphism $\pi|_{\widetilde{F}}: \widetilde{F} \rightarrow F^{'}$. So $\kappa((K_{\widetilde{X}}+B_{\widetilde{X}}^+)|_{\widetilde{F}})= \nu((K_{\widetilde{X}}+B_{\widetilde{X}}^+)|_{\widetilde{F}})=0$. Thus we can run a relative MMP with ample scaling over $\widetilde{Y}$ to get 

\begin{center}
\begin{tikzcd}
\widetilde{X} \arrow[r, dotted,"\psi"] \arrow[d, "\widetilde{\phi}"] &\widetilde{X}_m \arrow[dl, "\widetilde{\phi}_m"] \\
\widetilde{Y}
\end{tikzcd}
\end{center}

such that letting $K_{\widetilde{X}_m}+B_{\widetilde{X}_m}^+=\psi_*(K_{\widetilde{X}}+B_{\widetilde{X}}^+)$ and $\Delta_{\widetilde{X}_m}:=\psi_*\Delta$, we have $( K_{\widetilde{X}_m}+B_{\widetilde{X}_m}^+)|_{\widetilde{F}_m} \sim_{\mathbb{Q}} 0$, $\widetilde{F}_m$ being the general fiber of $\widetilde{\phi}_m$ (see the first paragraph of the proof of \cite[Lemma 4.4]{GL} for details). Consider the induced klt-trivial fibration 
\begin{center}
$\widetilde{\phi}_m: (\widetilde{X}_m, B_{\widetilde{X}_m}+\Delta_{\widetilde{X}_m}^+- \Delta_{\widetilde{X}_m}^-) \rightarrow \widetilde{Y}$.
\end{center}
Note: the fact that $\widetilde{\phi}_m$ is klt-trivial can be shown by choosing a smooth resolution of indeterminacy $\widetilde{X} \xleftarrow{p} W \xrightarrow{q} \widetilde{X}_m$ of $\psi$ and using the fact that $p^*(K_{\widetilde{X}}+B_{\widetilde{X}}^+)=q^*(K_{\widetilde{X}_m}+B_{\widetilde{X}_m}^+)+E$ where $E \geq 0$ is $q$-exceptional. By \cite[Lemma 2.6]{Am1}, $ \widetilde{\phi}_m$ and $\widetilde{\phi}$ induce the same discriminant and moduli divisors on $\widetilde{Y}$. In the case of $\widetilde{\phi}_m$, $(\Delta_{\widetilde{X}_m}^-)|_{\widetilde{F}_m} \sim_{\mathbb{Q}} 0$, thus by \cite[Theorem 3.3]{Am2}, $M_{\widetilde{Y}}$ is b-nef and b-good. Hence, by the arguments in the proof of \cite[Theorem 4.1]{Am2}, we can write

\begin{center}
$K_{\widetilde{X}}+B_{\widetilde{X}}^+ + \Delta^+ - \Delta^- \sim_{\mathbb{Q}}\widetilde{\phi}^*(K_{\widetilde{Y}}+\Delta_{\widetilde{Y}})$.
\end{center} 
where $(\widetilde{Y}, \Delta_{\widetilde{Y}})$ is klt. We write this as 
\begin{equation}
K_{\widetilde{X}}+B_{\widetilde{X}}^+ + \Delta^+ - (\Delta^-)^h \sim_{\mathbb{Q}} \widetilde{\phi}^*(K_{\widetilde{Y}}+\Delta_{\widetilde{Y}})+ (\Delta^-)^v \label{star}
\end{equation}
where $^h$ and $^v$ denote the horizontal and vertical parts with respect to $ \widetilde{\phi} $ respectively. As observed above, $ (K_{\widetilde{X}}+B_{\widetilde{X}}^+)|_{\widetilde{F}} $ is $\pi|_{\widetilde{F}}$-exceptional and $(K_{\widetilde{X}}+B_{\widetilde{X}}^+)|_{\widetilde{F}} \sim_{\mathbb{Q}} (\Delta^-)^h|_{\widetilde{F}}$. Thus $(\Delta^-)^h|_{\widetilde{F}}= N_{\sigma}((K_{\widetilde{X}}+B_{\widetilde{X}}^+)|_{\widetilde{F}})(=(K_{\widetilde{X}}+B_{\widetilde{X}}^+)|_{\widetilde{F}})$. Now it follows from the definition of $N_{\sigma}$ that 
\begin{equation}
N_{\sigma}((K_{\widetilde{X}}+B_{\widetilde{X}}^+)|_{\widetilde{F}}) \leq N_{\sigma}(K_{\widetilde{X}}+B_{\widetilde{X}}^+)|_{\widetilde{F}}.
\end{equation}
Since $\widetilde{F}$ is a general fiber and $(\Delta^-)^h$ has no vertical components, it follows that 
\begin{equation}
(\Delta^-)^h \leq N_{\sigma}(K_{\widetilde{X}}+B_{\widetilde{X}}^+) \label{ddag}
\end{equation}
Since $P_{\sigma}(K_{\widetilde{X}}+B_{\widetilde{X}}^+)$ is effective, it follows that $K_{\widetilde{X}}+B_{\widetilde{X}}^+- (\Delta^-)^h \geq 0$. By \cite[Lemma 1.8]{Nak}, (\ref{ddag}) implies that $N_{\sigma}(K_{\widetilde{X}}+B_{\widetilde{X}}^+- (\Delta)^h)= N_{\sigma}(K_{\widetilde{X}}+B_{\widetilde{X}}^+)-(\Delta^-)^h$ and thus $P_{\sigma}(K_{\widetilde{X}}+B_{\widetilde{X}}^+-(\Delta^-)^h)= P_{\sigma} (K_{\widetilde{X}}+B_{\widetilde{X}}^+)$. Now we take $P_{\sigma}$ in (\ref{star}). Since $ \kappa(D)= \kappa(P_{\sigma}(D))$ for any pseudoeffective divisor $D$ (\cite[Lemma 2.9]{GL}), the fact that $\Delta^+$ and $B_{\widetilde{X}}^-$ are exceptional over $X$, the $\widetilde{\phi}$-degeneracy of $(\Delta^-)^v$ and \cite[Lemma 2.16]{GL} imply that

\begin{equation}
\kappa(K_X+B)= \kappa(K_{\widetilde{Y}}+\Delta_{\widetilde{Y}})  \label{bstar}
\end{equation}
Let $\mu: \widetilde{X} \rightarrow X$ be the induced birational morphism. Then (\ref{star}) gives 
\begin{equation}
K_{\widetilde{X}}+B_{\widetilde{X}}^++L_{\widetilde{X}}+\Delta^+-(\Delta^-)^h \equiv \widetilde{\phi}^*(K_{\widetilde{Y}}+\Delta_{\widetilde{Y}}+L_{\widetilde{Y}})+(\Delta^-)^v \label{dag}
\end{equation}
where $\Delta^+$ is exceptional over $X^{'}$ and $\widetilde{Y}$ and $(\Delta^-)^v$ is $\widetilde{\phi}$-degenerate. Since $L_{\widetilde{X}}|_{\widetilde{F}} \equiv 0$, as observed above, we have  
\begin{center}
$N_{\sigma}((K_{\widetilde{X}}+B_{\widetilde{X}}^++L_{\widetilde{X}})|_{\widetilde{F}})= N_{\sigma}((K_{\widetilde{X}}+B_{\widetilde{X}}^+)|_{\widetilde{F}}) \geq (\Delta^-)^h|_{\widetilde{F}}$ implying $ (\Delta^-)^h \leq N_{\sigma}((K_{\widetilde{X}}+B_{\widetilde{X}}^++L_{\widetilde{X}})$ 
\end{center}
and exactly as before, we conclude that 
\begin{center}
$P_{\sigma}(K_{\widetilde{X}}+B_{\widetilde{X}}^++L_{\widetilde{X}}+\Delta^+-(\Delta^-)^h)= P_{\sigma}(K_{\widetilde{X}}+B_{\widetilde{X}}^++L_{\widetilde{X}}+ \Delta^+)$.
\end{center}
But the latter equals 
\begin{center}
$P_{\sigma}(\mu^*(K_X+B+L)+B_{\widetilde{X}}^-+\Delta^+)=P_{\sigma}(\mu^*(K_X+B+L))=\mu^*(K_X+B+L)$
\end{center}
since $\Delta^+$ and $B_{\widetilde{X}}^-$ are exceptional over $X$ and $K_X+B+L$ is nef. Now taking $P_{\sigma}$ in (\ref{dag}) and using \cite[Lemma 2.16]{GL}, we get
\begin{center}
$\mu^*(K_X+B+L)=P_{\sigma}(\widetilde{\phi}^*(K_{\widetilde{Y}}+\Delta_{\widetilde{Y}}+L_{\widetilde{Y}}))$.
\end{center}
By \cite[Theorem 5.3]{LP}, there exists a smooth projective $\overline{Y}$ and $w: \overline{Y} \rightarrow \widetilde{Y}$ birational such that $P_{\sigma}(w^*(K_{\widetilde{Y}}+\Delta_{\widetilde{Y}}+L_{\widetilde{Y}}))$ is numerically equivalent to a semiample divisor. Letting $\overline{X}$ denote a desingularization of the main component of $\widetilde{X} \times _{\widetilde{Y}}\overline{Y}$ and $v: \overline{X} \rightarrow \widetilde{X}$, $ \overline{\phi}:\overline{X} \rightarrow  \overline{Y}$ the induced morphisms, we have

\begin{center}
$ \overline{\phi}^*P_{\sigma}(w^*(K_{\widetilde{Y}}+\Delta_{\widetilde{Y}}+L_{\widetilde{Y}}))= P_{\sigma}(\overline{\phi}^*w^*(K_{\widetilde{Y}}+\Delta_{\widetilde{Y}}+L_{\widetilde{Y}}))$ (since $P_{\sigma}(w^*(K_{\widetilde{Y}}+\Delta_{\widetilde{Y}}+L_{\overline{Y}}))$ is nef; see \cite[Lemma 2.5]{LP}) $= P_{\sigma}(v^*\widetilde{\phi}^*(K_{\widetilde{Y}}+\Delta_{\widetilde{Y}}+L_{\widetilde{Y}}))= v^*P_{\sigma}(\widetilde{\phi}^*(K_{\widetilde{Y}}+\Delta_{\widetilde{Y}}+L_{\widetilde{Y}}))$ (since $P_{\sigma}(\widetilde{\phi}^*(K_{\widetilde{Y}}+\Delta_{\widetilde{Y}}+L_{\widetilde{Y}}))$ is nef) $=v^* \mu^*(K_X+B+L)$ ($*$)
\end{center}
is numerically equivalent to a semiample divisor. Thus $K_X+B+L$ is numerically equivalent to a semiample divisor.

\end{proof}
\begin{remark}
The above result can also be proved by using the MMP instead of the canonical bundle formula. See for example the proof of \cite[Theorem 3.5]{LP2}.
\end{remark}

\begin{proof}
[Proof of Corollary \ref{10}]
\begin{enumerate}\item If $n(K_X+B+L)=2$, then $\dim \widetilde{Y} =2$ and $K_{\widetilde{Y}}+\Delta_{\widetilde{Y}} \geq 0$ by (\ref{bstar}). Termination of flips for threefold pairs and abundance for surface and threefold pairs are classical. Semiampleness and generalized non-vanishing conjectures hold for surface pairs by \cite[Corollary C, page 4]{LP}. Thus the case $n(K_X+B+L)=2$ i.e. when $\dim \widetilde{Y} =2$ is immediate. \vspace{0.5 cm}
\item
 If $n(K_X+B+L)=3$, then $\dim \widetilde{Y} =3$ and by (\ref{bstar}), $ \kappa(K_{\widetilde{Y}}+\Delta_{\widetilde{Y}}) >0$. Then by \cite[Remark 5.4]{LP}, $P_{\sigma}(w^*(K_{\widetilde{Y}}+\Delta_{\widetilde{Y}}+L_{\widetilde{Y}}))$ is numerically equivalent to a semiample divisor for some birational morphism $w:\overline{Y}\rightarrow \widetilde{Y}$ from a smooth projective variety $\overline{Y}$. Then so is $K_X+B+L$ by ($*$) above.
\end{enumerate}

\end{proof}
\section{\emph{Acknowledgements}} I thank my advisor Patrick Brosnan for several helpful conversations on the contents of this note, Vladimir Lazić for his comments on a draft version, Brian Lehmann for answering a question and the referee whose comments improved the readability of this paper. An email conversation with Florin Ambro provided the impetus for this work.

\end{document}